\documentclass{article}

\usepackage[utf8]{inputenc}
\usepackage{a4wide}
\usepackage{color}
\usepackage{graphicx}
\usepackage{xspace}
\usepackage{url}
\usepackage{amsmath,amssymb}
\usepackage{hyperref}
\usepackage{authblk}
\usepackage{comment}

\graphicspath{ {./body/} }

\newcommand{\stef}[1]{{\color{red}\emph{Stéphan:} #1}}
\newcommand{\lv}[1]{#1}

\newcommand{\stb}[1]{#1}
\newcommand{\reviews}[1]{#1}

\newtheorem{theorem}{Theorem}
\newtheorem{conjecture}{Conjecture}
\newtheorem{problem}{Problem}

\newtheorem{corollary}{Corollary}
\newtheorem{proposition}{Proposition}

\def\Box{\hbox{\hskip 1pt \vrule width 4pt height 8pt depth 1.5pt \hskip 1pt}}
\newenvironment{proof}{\medskip\noindent\textbf{Proof.}}{{}\hfill$\Box$\\}
\DeclareMathOperator{\rf}{{\it rf}}

\title{Temporalizing digraphs via linear-size balanced bi-trees}

\author[1]{Stéphane Bessy}
\affil[1]{LIRMM, Univ Montpellier, CNRS, Montpellier, France}
\author[2]{Stéphan Thomassé}
\affil[2]{Univ Lyon, EnsL, UCBL, CNRS, LIP, F-69342, LYON Cedex 07, France}
\author[3]{Laurent Viennot}
\affil[3]{Inria Paris, Irif, Université Paris Cité}


\begin{document}


\maketitle
\begin{abstract}
  In a directed graph $D$ on vertex set $v_1,\dots ,v_n$, a
  \emph{forward arc} is an arc $v_iv_j$ where $i<j$. A pair $v_i,v_j$
  is \emph{forward connected} if there is a directed path from $v_i$
  to $v_j$ consisting of forward arcs. In the {\tt Forward Connected
    Pairs Problem} ({\tt FCPP}), the input is a strongly connected
  digraph $D$, and the output is the maximum number of forward
  connected pairs in some vertex enumeration of $D$. We show that {\tt
    FCPP} is in APX, as one can efficiently enumerate the vertices of
  $D$ in order to achieve a quadratic number of forward connected
  pairs. For this, we construct a linear size balanced bi-tree $T$ (an
  out-tree and an in-tree with same size which roots are
  identified). The existence of such a $T$ was left as an open problem
  motivated by the study of temporal paths in temporal networks. More
  precisely, $T$ can be constructed in quadratic time (in the number
  of vertices) and has size at least $n/3$. The algorithm involves a
  particular depth-first search tree (Left-DFS) of independent
  interest, and shows that every strongly connected directed graph has
  a balanced separator which is a circuit. Remarkably, in the request
  version {\tt RFCPP} of {\tt FCPP}, where the input is a strong
  digraph $D$ and a set of requests $R$ consisting of pairs
  $\{x_i,y_i\}$, there is no constant $c>0$ such that one can always
  find an enumeration realizing $c.|R|$ forward connected pairs
  $\{x_i,y_i\}$ (in either direction).
\end{abstract}
\section{Introduction}
\label{sec:introduction}



Motivated by network design applications, the following problem of scheduling the arcs of a multi-digraph was mentioned as an open problem in \cite{BrunelliCV23} and formally introduced in \cite{BalliuBCOV2023}. The {\tt Maximum Reachability
Edge Temporalisation (MRET)} consists in assigning a time label $t_{xy}\in \mathbb{N}$ to each arc $xy$ of a digraph $D=(V,A)$ so as to maximize the number of pairs $x,z$ of vertices connected by a \emph{temporal path}, that is a path from $x$ to $z$ where time labels strictly increase along the path. 
In the acyclic case, all existing paths can be made temporal using a topological ordering of the vertices, and transferring the index of a node $x$ to every arc leaving $x$. The problem becomes particularly intriguing in the strongly connected case in which every pair of vertices are connected by a path. It was shown in~\cite{BalliuBCOV2023} that \texttt{MRET} is NP-hard even restricted to strongly connected digraphs. In the same paper, the authors suggest that \texttt{MRET} is in APX by conjecturing, in any strongly connected digraph, the existence of some arc-disjoint in-branching $T^-$ and out-branching $T^+$ \reviews{(allowing arbitrary overlapping in terms of vertices)}, both with linear size and rooted at the same vertex $r$. 

We show that this conjecture holds, and since the branchings can be constructed in polynomial time, that \texttt{MRET} is indeed in APX \stb{in the strongly connected case}. 
The reason is that given such branchings $T^-$ and $T^+$, it is then straightforward to schedule the arcs of $T^-$ from leaves to $r$, and then the arcs of $T^+$ from $r$ to leaves, to obtain an arc scheduling temporally connecting $|T^-|\cdot|T^+|$ pairs of vertices. If both branchings span a fraction $c$ of vertices, for some $c>0$, then this scheduling temporally connects at least a fraction $c^2$ of all pairs which guarantees approximation ratio at most $1/c^2$.

\paragraph*{Related work}
The undirected version of the \texttt{MRET} problem is studied in \cite{GobelCV91} where it is proven that it is NP-complete to decide whether an undirected graph has an edge scheduling connecting all pairs. However, one can easily produce an edge scheduling connecting a constant fraction of pairs by decomposing a spanning tree at a centroid $c$ so as to produce two disjoint trees $T,T'$ with the same root $c$ and each of them covering one third of the vertices. It is then straightforward to schedule edges so as to 
connect a constant fraction of pairs.
A similar (undirected) problem where an edge can be scheduled several times is considered in \lv{a series of papers \cite{AkridaGMS17,KlobasMMS22,MertziosMS19}}. 
The goal is then to minimize the total number of time labels used while respecting some constraints with respect to connnectivity or the maximum time label used in particular. Note that scheduling each edge twice is sufficient for temporally connecting every pair of nodes of a connected undirected graph \reviews{(a first set of labels can allow each vertex to reach the root of a spanning tree, while a second set of labels can allow the root to reach every node)}. 
The \texttt{MRET} problem can be seen as a simplified version of the problem of scheduling buses in a public transit network~\cite{BrunelliCV23}.
Related problems~\cite{EnrightMMZ21,MolterRZ21} target the minimization of temporally connected pairs and are driven by applications to mitigation of epidemic propagation.

It is shown in \cite{Bang-Jensen91} that it is NP-complete to decide whether a strongly connected digraph contains an in-branching and an out-branching with same root, which are edge disjoint, and that both span all vertices. The same paper also relates the conjecture that such branchings exist if the digraph is $c$-edge-connected for sufficiently large $c$. The conjecture holds for $c=2$ in digraphs with independence number at most 2~\cite{Bang-JensenBHY22}. It is also shown in \cite{Bang-JensenCH16} that it is NP-complete to decide whether a strongly connected digraph has a partition of its vertices into two parts of size at least 2, such that the first part is spanned by an in-branching and the second part is spanned by an out-branching.


\paragraph*{Main results}
Our approach is to address these problems from a vertex point of view, which is enough to obtain an approximation algorithm for \texttt{MRET}. Specifically, we consider two maximization problems: In the \texttt {Forward Connected Pairs Problem} (\texttt{FCPP}) the goal is to find an enumeration \reviews{(or ordering)} $v_1,\dots ,v_n$ of a strongly connected digraph $D=(V,A)$ such that the number of pairs $v_i,v_j$ joined by a directed path with increasing indices (called \emph{forward pairs}) is maximized. In the \texttt {Balanced Bi-Tree Problem} (\texttt{BBTP}) the goal is to find an in-tree and an out-tree only intersecting at their root (thus making a \emph{bi-tree}) with equal size (to be maximized). \reviews{(We use in-tree and out-tree instead of in-branching and out-branching to stress the requirement that they share no vertex apart from their common root, and hence do not span the digraph.)} \stb{Notice that the former problem, {\tt FCPP}, is NP-hard, as finding an enumeration joining $n(n-1)/2$ pairs of vertices with forward paths is equivalent to finding a Hamilton path in the instance (and Hamilton path in general digraphs can easily be reduced to Hamilton path in strong digraphs).}


The main result of this paper is that one can find a solution $T$ of \texttt{BBTP} in time $O(n^2)$ with size at least $n/3-1$. Therefore each in-tree and out-tree has size at least $n/6$, and by considering any enumeration of $V$ extending a topological ordering of $T$, we obtain a solution of \texttt{FCPP} of size at least $n^2/36$. Since the maximum possible solution of \texttt{FCPP} is $n^2/2$, this gives a $1/18$ approximation for the \texttt {Forward Connected Pairs Problem}. Our construction can be extended to a weighted digraph where each vertex $u$ is associated to a weight $w_u$. It then produces a bi-tree where the in-tree and the out-tree both have total weight at least $W/6$, where $W=\sum_{u\in V} w_u$ is the total weight of the digraph.

We also consider a covering version \texttt {CFCPP} of \texttt{FCPP} where we look for a minimal set of orderings of the vertex set such that for every pair $x,y$, one of $xy$ or $yx$ is a forward pair in one of the orderings. Since we can cover a positive fraction of pairs, it is natural to wonder if \texttt {CFCPP} \stb{always admits a solution with a constant number of orderings} (or at most $\log n$).
To this end, we consider a request variant of the problem, called \texttt{RFCPP}, where we ask to connect by forward paths a maximum number of pairs among a given set $R\subseteq {V\choose 2}$ of requested pairs. We provide a family of instances \stb{of \texttt{RFCPP} where the number of needed ordering to satisfy all the requests of $R$ is more than a constant fraction of $|R|$}. We do not know if \texttt {CFCPP} and \texttt {RFCPP} can be efficiently approximated. 
This still leaves open the existence of an $O(\log n)$ solution for \texttt {CFCPP}.
\lv{Note that both \texttt{CFCPP} and \texttt{RFCPP} are NP-hard for the same reason as \texttt{FCPP}.}
\lv{The approximation of \reviews{the variant of} \texttt{FCPP} \reviews{extended to} general digraphs is also left open. We think that solving the strong case is a key step towards this more ambitious goal since the acyclic case can easily be solved exactly \reviews{as mentioned previously}.}

\paragraph*{Main techniques: left-maximal DFS and balanced circuit separators}

From an algorithmic perspective, our solution relies on finding a \emph{cyclic balanced separator} $C$ of $D$. Specifically, the vertex set of $D$ is partitioned into three parts $I,C,O$ such that $C$ spans a circuit, no arc links a node from $I$ to a node in $O$, and which is balanced in the sense that both $I\cup C$ and $C\cup O$ have size at least $n/3$. Note that $I$ and $O$ can be empty, as this is the case when $D$ is a circuit.

Such a partition can be computed in linear time from a left-maximal depth-first-search (DFS) tree, that is a DFS tree such that the children of any node are ordered from left-to-right by non-increasing sub-tree size. Both of these structures could be of independent interest in the field of digraph algorithms. The computation of a left-maximal DFS is the (quadratic) complexity bottleneck of our algorithm. We feel that a linear-time algorithm for finding a cyclic balanced separator should be achievable either by other means, or by relaxing the requirement on left-maximal DFS (we just need it to be "not too much unbalanced to the right"). However, actually computing in linear time a left-maximal DFS tree could prove more challenging. Could decremental SCC help~\cite{BernsteinPW19}?

\section{Definitions}
\label{sec:definitions}

In this paper we consider directed graphs $D=(V,A)$ (\emph{digraphs}) in which cycles of length two are allowed. The set $V$ is the set of \emph{vertices} (usually $n$ of them) and $A$ is the set of \emph{arcs}. We say that $x,y$ are \emph{connected} in $D$ if there exists a directed path from $x$ to $y$. A digraph is \emph{strongly connected}, or simply \emph{strong}, if all $x,y$ are connected. In particular, if $D$ has $n$ vertices, the number of connected couples $x,y$ is $n^2$. The \emph{out-section} generated by a vertex $x$ of $D$ is the set of vertices $y$ for which $x,y$ are connected. We say that $x$ \emph{out-generates} $D$ if the out-section of $x$ is $V$. 

A \emph{schedule} is an injective mapping $f$ of $A$ into the positive integers. In a scheduled digraph $(D,f)$ we say that a couple of vertices $x,y$ is \emph{connected} if there is a directed path $x=x_1,x_2,\dots ,x_k=y$ such that $f(x_ix_{i+1})<f(x_{i+1}x_{i+2})$ for all $i=1,\dots,k-2$. We denote by $c(D,f)$ the number of connected couples and by $s(D)$ the maximum over all choices of $f$ of $c(D,f)$. We now define our scheduled ratio $r_s$ which is the infimum of $s(D)/n^2$ over strongly connected digraphs $D$ on $n$ vertices, when $n$ goes to infinity.

This ratio $r_s$ is based on scheduling arcs, and a similar ratio $r_t$ can be defined via a total order on vertices. We consider for this a digraph $D$ and a total ordering $<$ on its vertices. An arc $xy$ of $D$ is \emph{forward} if $x<y$. We say that a couple $(x,y)$ is \emph{connected} in $(D,<)$ if there is a directed path from $x$ to $y$ consisting of forward arcs, and we then call $(x,y)$ a \emph{forward couple}. We denote by $c(D,<)$ the number of forward couples and by $t(D)$ the maximum over all choices of $<$ of $c(D,<)$. The ordered ratio $r_t$ is the infimum of $t(D)/n^2$ over strongly connected digraphs $D$ on $n$ vertices, when $n$ goes to infinity.

The main problem we address in this paper is to show that both $r_s$ and $r_t$ are positive. Let us first prove that these questions are related.

\begin{theorem}\label{th:ratioequivalence}
$r_t\leq r_s$
\end{theorem}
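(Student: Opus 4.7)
The plan is to show that for every strongly connected digraph $D$ on $n$ vertices, $t(D)\leq s(D)$, which immediately yields $r_t\leq r_s$ by taking infima. Concretely, starting from an ordering $<$ on $V$ achieving $t(D)$ forward couples, I will exhibit an injective schedule $f$ of the arcs of $D$ such that every forward couple in $(D,<)$ is automatically a temporally connected couple in $(D,f)$, so that $s(D)\geq c(D,f)\geq c(D,<)=t(D)$.

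First, I would write $V=\{v_1,\dots,v_n\}$ with $v_1<v_2<\cdots<v_n$, and define $f$ on forward arcs by a linear formula that is strictly increasing in the head index and only mildly dependent on the tail index, e.g.\ $f(v_iv_j):=(n+1)\,j+i$ whenever $i<j$. The two points to check are: (i) injectivity of $f$ on forward arcs, which follows because the map $(i,j)\mapsto (n+1)j+i$ is injective on $\{1,\dots,n\}^2$; and (ii) that along any forward path $v_{i_1}v_{i_2}\cdots v_{i_k}$ (so $i_1<i_2<\cdots<i_k$) the labels are strictly increasing, because
$$f(v_{i_{\ell+1}}v_{i_{\ell+2}})-f(v_{i_\ell}v_{i_{\ell+1}})=(n+1)(i_{\ell+2}-i_{\ell+1})+(i_{\ell+1}-i_\ell)\geq (n+1)-(n-1)>0.$$
The backward arcs (those $v_iv_j$ with $i>j$) play no role in forward paths, so I would just label them with arbitrary distinct integers larger than $(n+1)n$, extending $f$ to an injection $A\to\mathbb{N}$.

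With this $f$, any forward path in $(D,<)$ is a temporal path in $(D,f)$, so every forward couple of $(D,<)$ is a connected couple of $(D,f)$. Hence $s(D)\geq c(D,f)\geq t(D)$, and taking the infimum over strong digraphs as $n\to\infty$ gives $r_t\leq r_s$.

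I do not foresee any real obstacle: the argument is a purely local labelling trick, and the only care needed is in choosing a labelling that is simultaneously injective on the arc set and monotone along any forward path, which is achieved by letting the head index dominate the label.
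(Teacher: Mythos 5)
Your proof is correct and takes essentially the same approach as the paper: both constructions assign arc labels by a linear formula in the vertex-order indices so that labels strictly increase along any forward path (the paper uses the tail-dominant $f(xy)=n\,g(x)+g(y)$ applied uniformly to all arcs, which is automatically injective; you use the head-dominant $(n+1)j+i$ on forward arcs and extend separately to backward arcs). One small slip: forward labels range up to $(n+1)n+(n-1)$, so backward arcs should receive labels strictly larger than $(n+1)n+(n-1)$ (e.g.\ at least $(n+1)^2$), not merely larger than $(n+1)n$, to guarantee injectivity of $f$ on all of $A$.
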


\begin{proof}
Given $(D,<)$, we can consider that $<$ is a bijective mapping $g$ from $V$ to $0,\dots, n-1$ respecting the order (that is $x<y$ whenever $g(x)<g(y)$). Now observe that if one define $f(xy)=n.g(x)+g(y)$ for every arc, then every forward couple in $(D,<)$ is a connected pair in $(D,f)$.
\end{proof}


\noindent
Thus we can focus on the following problem.

\begin{problem}\label{pb:zeconjecture}
What is the value of $r_t$?
\end{problem}

The fact that $r_t>0$ is not obvious and is indeed our central result. Observe that we can assume that $D$ is \emph{minimally strongly connected}, i.e. every arc $xy$ of $D$ is the unique directed $x,y$-path in $D$. A classical result using ear-decompositions asserts that the number of arcs of a minimally strongly connected digraph is at most $2n-2$ \stb{(see~\cite{Bang-JensenGutinBook09} for instance)}. Unfortunately we were unable to use these decompositions to prove the positivity of $r_t$. Our strategy instead is to find inside $D$ a particular type of oriented tree.

An \emph{out-tree} $T^+$ is an orientation of a tree in which one \emph{root} vertex out-generates $T^+$. Reversing all arcs, we obtain an \emph{in-tree}. When identifying the root $r$ of an in-tree $T^-$ and the root of an out-tree $T^+$, we obtain a tree orientation called a \emph{bi-tree} $T$ where $r$ is the \emph{center}. Note that $x,y$ are connected for every $x\in T^-$ and $y\in T^+$. We say that a bi-tree is \emph{balanced} if $|T^+|=|T^-|$. In particular, if every strongly connected $D$ contains a balanced bi-tree of linear size, one directly obtains that $r_t>0$ for Problem~\ref{pb:zeconjecture}. 

\begin{problem}\label{pb:zeconjecture2}
What is the maximum $c_b$ for which every strongly connected directed graph on $n$ vertices has a balanced bi-tree of size at least $c_b.n$?
\end{problem}

We show in Theorem~\ref{th:bitree-existence} that $c_b\geq 1/3$, where both $|T^+|$ and $|T^-|$ have size at least $1/6$.
One can naturally ask if the enumeration problem directly implies the bi-tree problem. But the following example shows that this is not true in general.

\begin{proposition}
\label{prop:example-sec2}
For every integer $k$, there exists a minimally strongly connected digraph $D$ on $n=3k^2+2k+2$ vertices admitting an enumeration $E=v_1,\dots, v_n$ such that:

- the number of forward couples in $E$ is quadratic in $n$ (at least $k^4$), and

- the maximal size of a balanced bi-tree only using forward arcs in $E$ is $2k+5=O(\sqrt{n})$.
\end{proposition}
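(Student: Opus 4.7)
The plan is to exhibit an explicit family $(D_k)_{k \ge 1}$ of minimally strongly connected digraphs on $n = 3k^2 + 2k + 2$ vertices together with a natural enumeration $E = v_1,\dots,v_n$. I would partition $V$ into three blocks: an \emph{in-block} $I$ of $k^2$ vertices arranged as $k$ disjoint out-paths $I_1,\dots,I_k$ of $k$ vertices each; an \emph{out-block} $O$ of $k^2$ vertices arranged as $k$ disjoint out-paths $O_1,\dots,O_k$ of $k$ vertices each; and a \emph{feedback block} $F$ of $k^2+2k+2$ vertices whose sole role is to close $D$ into a minimally strongly connected digraph. Besides the internal chain arcs, the forward structure is designed so that every end-vertex of an in-chain has a forward path to every start-vertex of an out-chain; the back-arcs all lie inside $F$ and are organized so that every arc of $D$ is critical. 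The enumeration lists $I$ first (chain by chain in forward order), then $O$ (chain by chain), then the vertices of $F$ grouped according to the gadgets of the construction.

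Counting forward couples is then immediate: every vertex at depth $a$ of an in-chain $I_i$ admits a forward path to every vertex of every out-chain $O_j$ (first along $I_i$, then across to $O_j$, then along $O_j$). This yields at least $|I|\cdot|O| = k^2\cdot k^2 = k^4$ forward couples, as required for the first item.

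For the bi-tree bound I would argue by cases on the center $c$. If $c$ lies at depth $a$ in an in-chain $I_i$, the only vertices forward-reaching $c$ sit earlier in $I_i$, so the forward in-tree at $c$ has size at most $k$. Symmetrically, if $c$ lies in an out-chain $O_j$, the forward out-tree at $c$ is confined to the suffix of $O_j$ together with the short gadget of $F$ attached to $O_j$, so its size is at most $k + O(1)$. If $c$ lies in $F$, the feedback block is arranged as a collection of independent short gadgets placed consecutively in $E$, so on at least one side of $c$ only the gadget containing $c$ contributes to the forward reach, again bounded by $k + O(1)$. In every case, a balanced bi-tree centered at $c$ satisfies $|T^+| = |T^-| \le k + O(1)$, hence total size $\le 2k + O(1)$, yielding the announced bound $2k+5 = O(\sqrt{n})$.

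The hard part will be reconciling the three constraints simultaneously: \emph{minimality} of the strong connectivity (every arc must be critical), the ``in-chain to out-chain'' forward reachability needed to produce $k^4$ forward couples, and the absence of any $F$-vertex with both large forward in-reach and large forward out-reach. A naive complete bipartite between in-chain ends and out-chain starts destroys minimality, because each cross arc can be bypassed by routing through the back structure; conversely, a single long back-path in $F$ would have a middle vertex with both large in- and out-reach, breaking the bi-tree bound. I would resolve this tension by splitting $F$ into roughly $k$ parallel ``return gadgets'' of length $O(k)$, each dedicated to enforcing criticality of a small batch of cross arcs and occupying its own contiguous segment of $E$; this keeps every $F$-vertex's forward reach (or coreach) contained in its own gadget and allows arc-by-arc verification of minimality.
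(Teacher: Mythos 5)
Your proposal identifies the right target (a layered construction with $\Theta(k^2)$-sized blocks, giving $k^4$ forward couples and an $O(\sqrt n)$ balanced-bi-tree bound), but it leaves precisely the load-bearing part unresolved, and you say so yourself: reconciling minimality with the $k^2$ cross-connections from in-chain ends to out-chain starts. This is not a detail to be patched; it is the whole problem. As you observe, a complete bipartite family of $k^2$ cross arcs clashes with minimality once a return path exists, and you offer no mechanism by which ``return gadgets'' living in a block $F$ placed \emph{after} $O$ in the enumeration could certify criticality of those forward cross arcs. Since $F$ sits after $O$, a forward $I\to O$ path cannot pass through $F$, so $F$ cannot mediate the cross-connections either; it is structurally in the wrong place.

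The paper resolves the tension differently and more cleanly: instead of $k^2$ cross \emph{arcs}, it inserts a ``product block'' $X$ of $k^2$ \emph{vertices} $x_{i,j}$ \emph{between} the in-side and the out-side in the enumeration. Concretely, the in-side is a single source $x$ fanning out to $k^2$ vertices $A$, partitioned into $k$ bundles $A_i$ each collected by a vertex $a_i$; each $a_i$ fans out to $x_{i,1},\dots,x_{i,k}$; each $x_{i,j}$ has a single out-arc to $b_j$; each $b_j$ fans out to a bundle $B_j$ of $k$ vertices; all of $B$ feeds a single sink $y$, and the one back arc $yx$ closes the digraph. Every vertex of $X$ has in-degree~$1$ and out-degree~$1$, and indeed every arc $uv$ in the whole digraph has $d^+(u)=1$ or $d^-(v)=1$, so minimality is immediate with no case analysis. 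The $k^4$ forward couples come from $A_i \to a_i \to x_{i,j} \to b_j \to B_j$, and the bi-tree bound follows because the optimal center must lie in $X$, where in- and out-reach are each only $k+3$. Your $F$-block carries the right cardinality ($k^2+2k+2$) to match $n=3k^2+2k+2$ but serves the wrong function; the surplus $k^2$ vertices need to be the product block in the middle, not a feedback block at the end. Until that restructuring is done and the minimality argument is made concrete, the proposal does not constitute a proof.
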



\begin{proof}
Let $D$ be the digraph with vertex set $\{x\}\cup A \cup A' \cup X \cup B' \cup B \cup \{y\}$ where $A'$, $B'$ have size $k$ and $A$, $B$ and $X$ all have size $k^2$. Consider $(A_i)_{1\le i\le k}$ a partition of $A$ into $k$ sets of size $k$ and $(B_i)_{1\le i\le k}$ a partition of $B$ into $k$ sets of size $k$. Moreover, we denote by 
$a_1,\dots a_k$ the vertices of $A'$, $b_1,\dots b_k$ the vertices of $B'$, and
$(x_{i,j})_{1\le i,j \le k}$ the vertices of $X$. Now, add to $D$ the following sets of arcs: 
$\{xa\ :\ a\in A\}$, 
$\{aa_{i}\ :\ a\in A_i\}$ for all $1\le i\le k$,
$\{a_ix_{i,j}\ :\ j=1,\dots ,k\}$ for all $1\le i\le k$,
$\{x_{i,j}b_j\ :\ i=1,\dots ,k\}$ for all $1\le j\le k$,
$\{b_{j}b\ :\ b\in B_j\}$ for all $1\le j\le k$,
$\{by\ :\ b\in B\}$ and $\{yx\}$. The construction is depicted in Figure~\ref{fig:Example}.\\

\begin{figure}[!ht]
\centering
\scalebox{0.8}{\input{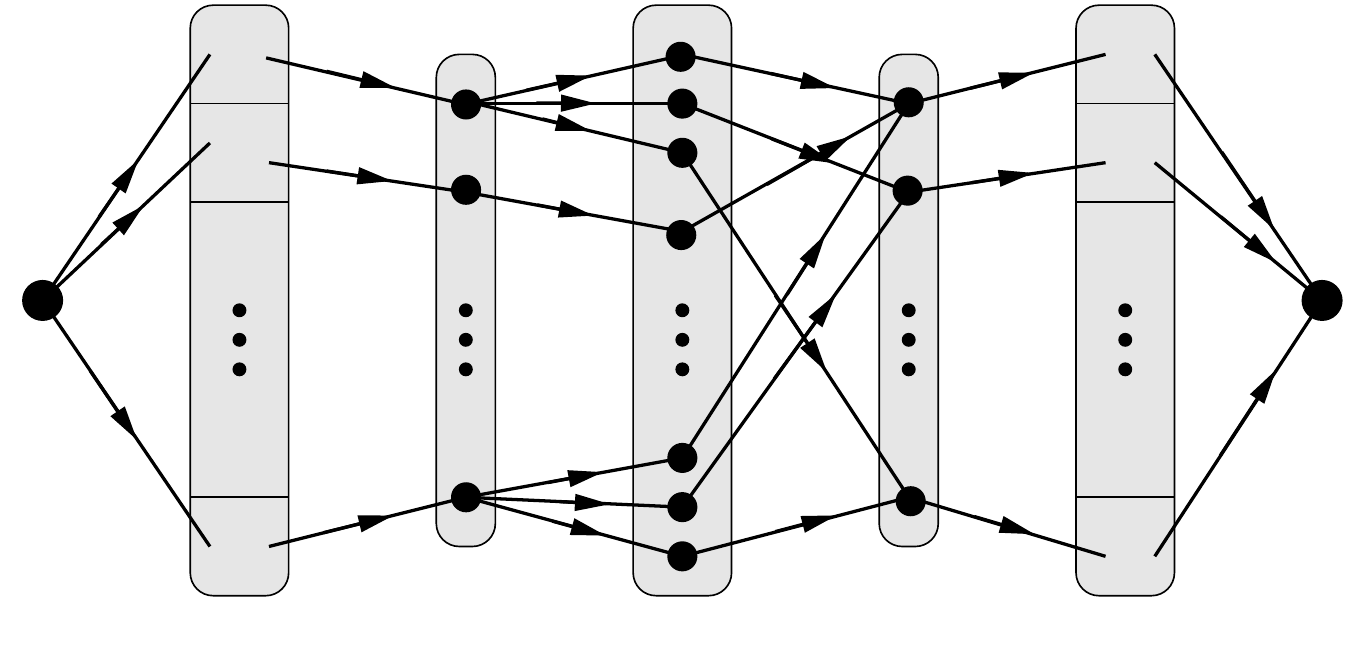_t}}
\caption{The digraph $D$ in the proof of Proposition~\ref{prop:example-sec2}. An arc between a block and a particular vertex stands for all the arcs between \reviews{each vertex} of the block and the particular vertex. The arc $yx$ is not drawn. 
}
\label{fig:Example}
\end{figure}

The digraph $D$ is strongly connected and has $n=3k^2+2k+2$ vertices. Furthermore, $D$ is minimally strongly connected, as for every arc $uv$ we have either $d^+(u)=1$ (if $uv=yx$ or $u\in A\cup X \cup B$) or  $d^-(v)=1$ (if $v\in A\cup X\cup B$).

Consider now any enumeration of $D$ where $x$ is the first vertex, then $A$ is before $A'$, then $A'$ is before $X$, then $X$ is before $B'$, then $B'$ is before $B$, and finally $y$ is the last vertex. For such an enumeration all the arcs of $D$ are forward except $yx$. For every $1\le i,j\le k$, any vertex of $A_i$ has a path to any vertex of $B_j$ using the vertex $x_{ij}$. So the number of forward couples is at least $|A|.|B|=k^4$, which is quadratic in $n$. However, the largest balanced bi-tree only using forward arcs has its  center in $X$ and has size $2k+5=O(\sqrt{n})$. \reviews{Indeed, any node $x_{i,j}\in X$ has only one in-neighbor which has itself $k$ in-neighbors, and only one out-neighbor which has itself $k$ out-neighbors, resulting in $2k+3$ nodes which can further connect to $x$ and $y$ only.}
\end{proof}



\section{Computing a left-maximal depth first search tree}
\label{sec:bitree-existence}

Given an out-tree $T$ and a node $x$ of $T$, we denote by $T_x$ the \emph{subtree} rooted at $x$ consisting of all vertices in the out-section of $x$ in $T$. A \emph{child} of $x$ is an out-neighbor of $x$ in $T$.
Let $D=(V,A)$ be a directed graph. A \emph{depth first search tree} $T$ of $D$ (\emph{\textsc{dfs}-tree} for short) is an out-tree which is a spanning subgraph of $D$ with the following properties:

\begin{itemize}
    \item For every node $x$, there is a total order $\leq _x$ on the children of $x$, which is called a \emph{left-right order}.
    \item If $y,z$ are two children of $x$ and there exists an arc of $D$ from $T_z$ to $T_y$, then $y\leq _x z$ (i.e. arcs between disjoint subtrees goes from right to left).
\end{itemize}

We are interested in a particular type of \textsc{dfs}-tree called \emph{left-maximal} in which we have $|T_y|\geq  |T_z|$ for every node $x$ and children $y,z$ such that $y\leq _x z$. In other words the size of the child subtrees of any vertex is non-increasing from left to right. 

\begin{theorem}\label{th:leftdfs-existence}
If $x$ out-generates $D$, then there exists a left-maximal \textsc{dfs}-tree $T$ rooted at $x$. Moreover $T$ can be computed in quadratic time in minimally strongly connected digraphs.
\end{theorem}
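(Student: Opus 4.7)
The plan is to construct $T$ greedily via a modified depth-first search rooted at $x$: at each step, with current vertex $v$ and unvisited set $R$, choose the next child of $v$ to be an out-neighbor $y \in R$ of $v$ that maximizes the size of the out-section of $y$ in $D[R]$ (breaking ties arbitrarily), then recurse into $y$'s subtree before returning to $v$ to pick the next child. Since $x$ out-generates $D$, iterating this visits all of $V$ and builds a spanning out-tree, and the left-to-right order on each vertex's children is defined as the order in which they were visited.

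That the result is a valid DFS-tree is a standard property of any DFS: if $u \in T_z$ has an arc to $w \in T_y$ with $y <_v z$, then $w$ was already visited when $u$ was reached—a cross-arc from right to left as required; an arc from $T_y$ to $T_z$ would contradict the fact that the DFS from $T_y$ exhaustively visited everything then reachable in $R$. For left-maximality, the key observation is that $T_{y_i}$, the DFS subtree rooted at the $i$th child of $v$, equals exactly the out-section of $y_i$ in $D[R_i]$, where $R_i$ is the unvisited set at the moment $y_i$ was chosen. Writing $\mathrm{OS}(y,R)$ for the out-section of $y$ in $D[R]$, then for consecutive children $y_i <_v y_{i+1}$, with $R_{i+1} \subseteq R_i$, we have
\[
|T_{y_{i+1}}| = |\mathrm{OS}(y_{i+1}, R_{i+1})| \le |\mathrm{OS}(y_{i+1}, R_i)| \le |\mathrm{OS}(y_i, R_i)| = |T_{y_i}|,
\]
the first inequality by monotonicity of out-sections under shrinking $R$ and the second by the greedy choice at step $i$. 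Applied at every vertex, this gives left-maximality of $T$.

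For the quadratic bound in minimally strongly connected digraphs, use $m \le 2n - 2$. At each extension, compute the strongly connected components of $D[R]$ and derive out-section sizes of all vertices in $D[R]$ via the condensation DAG in $O(|R| + m_R) = O(n)$ time; selecting the best out-neighbor of $v$ is then immediate. The DFS tree has $n-1$ edges, so there are $n-1$ extensions, giving $O(n^2)$ overall.

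The main subtlety I anticipate is the left-maximality argument: since $R$ shrinks between successive choices at $v$, one cannot compare candidates in a single static graph, and correctness relies on combining the monotonicity of out-sections under vertex removal with the greedy rule at each step. A secondary concern is obtaining $O(n^2)$ rather than $O(n^3)$—a naive per-candidate DFS at each extension would yield the latter, so batching the computation via a single SCC/condensation pass per extension is essential.
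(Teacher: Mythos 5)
Your proof is correct and takes essentially the same approach as the paper: a greedy depth-first search from $x$ that, at each step, chooses the child with the largest out-section in the remaining (unvisited) graph, together with the same per-step SCC/condensation computation yielding $O(n^2)$ for sparse (minimally strong) digraphs. The only cosmetic difference is that the paper phrases the construction recursively, finds a node $y$ with globally maximum out-section in $D\setminus\{x\}$, and then argues $y$ may be taken as an out-neighbor of $x$ via a source-SCC argument, whereas you restrict directly to out-neighbors and handle left-maximality via the clean monotonicity observation $|\mathrm{OS}(y,R')|\le|\mathrm{OS}(y,R)|$ for $R'\subseteq R$; these yield the same choices up to tie-breaking.
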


\begin{proof}
  We construct a left-maximal \textsc{dfs}-tree rooted at $x$ as follows. Compute the strongly connected components of $D\setminus\{x\}$ and their sizes. Consider the acyclic digraph $D'$ between components where an arc $(C,C')$ indicates that there is an arc from $C$ to $C'$  in $D$. By traversing $D'$ according to a reverse topological order, we obtain the size of the out-section of each node in $D\setminus\{x\}$. Let $y$ be a node with out-section $S_y$ having maximum size. Note that $y$ belongs to some strongly connected component $C$ which is a source in $D'$. Since $D$ is strongly connected, $C$ contains an out-neighbor of $x$. Free to choose $y$ in $C$, we assume for simplicity that $xy$ is indeed an arc. Construct recursively a left-maximal \textsc{dfs}-tree $T'$ rooted at $x$ in $D\setminus S_y$, and a left-maximal \textsc{dfs}-tree $T''$ rooted at $y$ in the digraph induced by $S_y$. The final out-tree $T$ is obtained by inserting $T''$ as the leftmost child of $x$ in $T'$. It is indeed a \textsc{dfs}-tree as there exists no arc from $S_y$ to any node in $D\setminus T'$ by the definition of outsection. To realize that it is also left-maximal, consider the leftmost child $z$ of $x$ in $T'$. The outsection $S_z$ of $z$ in $D\setminus\{x\}$ has size at most $|S_y|$, and we thus have $|T_z|\le |S_z|\le |S_y|=|T_y|$.

  As the computation of strongly connected components, digraph $D'$, and outsection sizes can be done in linear time, the whole computation can be done in $O(n^2)$ time in minimally strongly connected graphs (where the number of arcs is linear in the number of vertices). 
  \end{proof} 

\begin{problem}\label{pb:calculdfs}
Can we compute a left-maximal \textsc{dfs}-tree in $o(n^2)$ time in minimally strongly connected digraphs? What about the complexity in the general case?
\end{problem}

\medskip
Let us now introduce our key-definition which is a particular type of partition of a strongly connected digraph $D$. To get beforehand a bit of intuition, one can picture a connected (undirected) graph $G$ with a depth first search tree $T$ drawn on the plane. The key-feature here is that any path $P$ from the root to a leaf partitions the rest of the graph into two subsets $L$ and $R$ which are respectively the vertices of $V\setminus P$ to the left and to the right of $P$. In other words, $G$ has a  cutset $V(P)$ \reviews{(i.e. its removal splits the graph into several connected components)} with a remarkable property since it is spanned by a path. Observe also that \emph{any} root-leaf path can be used, so by a classical left-right sweeping argument, one can find $P$ such that both $L$ and $R$ have size at most $2n/3$ (so that the cut is balanced). We now generalize this argument to strongly connected digraphs, where a directed cycle takes over the role of $P$.

\smallskip
An \emph{$(I,C,O)$-decomposition} of a strongly connected digraph is defined as:

\begin{itemize}
    \item a partition of $V$ into three subsets $I,C,O$,
    \item $C$ is spanned by a directed cycle, 
     \item there is no arc from $I$ to $O$.
\end{itemize}

Observe that, by strong connectivity and the fact that there is no arc from $I$ to $O$, for every vertex $x$ in $I$, there exists a directed path from $x$ to $C$ with internal vertices inside $I$. Similarly,
for every vertex $x$ in $O$, there exists a directed path from $C$ to $x$ with internal vertices inside $O$. We now show how to get a  $(I,C,O)$-decomposition \reviews{which is additionally \emph{balanced}, that is such that both $I\cup C$ and $O\cup C$ have size greater than $n/3$}.

\smallskip
Given a \textsc{dfs}-tree $T$, we call \emph{left path} $L_T$ the path starting at the root  of $T$ and which iteratively selects the leftmost child of the current vertex as the next vertex of the path. In particular, in a planar drawing respecting the left-right order of children, $L_T$ is the path from the root to the leftmost leaf.
Let $x$ be a vertex of the left path of $L_T$. 
Given a child $y$ of $x$, we define the subtree $T_{x,y}$ of $T$ as the subtree of $T_x$ containing $x$ and all $T_z$ for $z\leq_x y$. We call $T_{x,y}$ a \emph{left subtree} of $T$. Note that $T_x$ is the left subtree $T_{x,y}$ obtained by selecting $y$ as its rightmost child. Also, if $y$ is the leftmost child, $x$ has only one child in $T_{x,y}$.

\begin{proposition}\label{prop:IOC-dec}
For every left subtree $T_{x,y}$, there exists an $(I,C,O)$-decomposition such that $T_{x,y}$ is included in $I\cup C$ and $V\setminus T_{x,y}$ is included in $O\cup C$.
\end{proposition}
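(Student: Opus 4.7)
The plan is to first classify the arcs of $D$ that leave $T_{x,y}$, and then exhibit a single directed cycle $C$ in $D$ whose vertex set is automatically a separator of the desired form. Throughout, write the left path of $T$ as $r_0 \to r_1 \to \cdots \to r_k = x$.

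First I would show that every arc $u\to v$ of $D$ with $u \in T_{x,y}$ and $v \notin T_{x,y}$ falls into one of two families: either (a) $u = x$, in which case $v$ is a strict ancestor of $x$ in $T$ or lies in $T_z$ for some child $z >_x y$ of $x$; or (b) $u \in T_{x,y} \setminus \{x\}$ and $v = r_j$ is a strict ancestor of $x$ in $T$. The key tool is the DFS cross-arc rule, namely that arcs between disjoint subtrees go right to left with respect to the local left-to-right order. Since $x$ lies on the left path, at every proper ancestor $a = r_i$ of $x$ the child $r_{i+1}$ of $a$ leading to $x$ is the \emph{leftmost} child of $a$; applied at each such $a$, this rules out any cross arc from $T_x$ to a cousin subtree hanging below $a$. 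Applied at $x$ itself, the same rule rules out any arc from $T_{z'}$ with $z' \leq_x y$ to $T_z$ with $z >_x y$, so the only tails of arcs reaching such a $T_z$ are at $x$.

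Given the classification, I would build $C$ in two cases. If no vertex of $T_{x,y} \setminus \{x\}$ has an out-arc to a strict ancestor of $x$, then every exiting arc has tail $x$, and I let $C$ be the vertex set of any directed cycle through $x$, which exists by strong connectivity of $D$. Otherwise, let $j^*$ be the smallest index for which some $u^* \in T_{x,y} \setminus \{x\}$ admits an arc $u^* \to r_{j^*}$, and let $z^* \leq_x y$ be the child of $x$ with $u^* \in T_{z^*}$. I take $C$ to be the vertex set of the directed cycle
\[
r_{j^*} \to r_{j^*+1} \to \cdots \to r_k = x \to z^* \to \cdots \to u^* \to r_{j^*},
\]
obtained by concatenating the left-path segment from $r_{j^*}$ to $x$ (tree arcs of $T$), the tree path in $T_{z^*}$ from $z^*$ down to $u^*$ (also tree arcs of $T$), and the chosen back arc $u^* \to r_{j^*}$. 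The ancestor portion lies in $\{r_{j^*},\ldots,r_k\}$ while the other portion lies strictly inside $T_x$, so all listed vertices are distinct and the cycle is simple.

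Finally I would set $I := T_{x,y} \setminus C$ and $O := V \setminus (T_{x,y} \cup C)$ and verify the three requirements. The inclusions $T_{x,y} \subseteq I \cup C$ and $V \setminus T_{x,y} \subseteq O \cup C$ are immediate, and $C$ spans a cycle by construction. For the absence of arcs from $I$ to $O$, any such arc is an exiting arc; by the classification its tail is either $x$, which is excluded since $x \in C$, or a vertex of $T_{x,y} \setminus \{x\}$ with head $r_j$, $j < k$. In the second case, the minimality of $j^*$ forces $j \geq j^*$, so $r_j \in \{r_{j^*},\ldots,r_{k-1}\} \subseteq C$, contradicting $r_j \in O$. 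The main obstacle is the classification step: one has to use the DFS left-to-right rule together with the fact that $x$ lies on the left path to rule out every cross arc from $T_{x,y}$ into cousin subtrees or into right-sibling subtrees of $x$. Once that is in place, the cycle above is the natural closure via a single back arc, and the separation property follows automatically.
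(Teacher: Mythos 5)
Your proposal is correct and follows essentially the same route as the paper: both identify the back arc from $T_{x,y}\setminus\{x\}$ to the highest possible vertex on the left path and take $C$ to be the resulting tree-path-plus-back-arc cycle, then observe that the choice of highest reachable ancestor forces all other exiting arcs to land inside $C$. The only cosmetic difference is that the paper merges your two cases by allowing the head $v$ of the back arc to equal $x$ itself (so the cycle then lies entirely inside $T_{x,y}$), whereas you split this off as a separate case using an arbitrary cycle through $x$; both handle it correctly.
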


\begin{proof}
As $T$ is a \textsc{dfs}-tree, any arc outgoing from $T_{x,y}\setminus \{x\}$ reaches a vertex in $T_{x,y}$ or in $L_T$. 
By strong connectivity, some arc $uv$ with $u\in T_{x,y}\setminus \{x\}$ reaches some vertex in $L_T$ between the root and $x$ \reviews{(possibly the root or $x$)}, and we select $uv$ such that $v$ has minimum distance from the root in $T$ \reviews{(see Figure~\ref{fig:ico} (Left) for an example)}. Note that $v$ can be equal to $x$ \reviews{(some arc $u'v'$ with $u'\in T_x\setminus T_{x,y}$ must then lead to a vertex $v'\in L_T$ closer to the root by strong connectivity)}. We define $C$ as the cycle formed the path $P_{vu}$ from $v$ to $u$ in $T$
and the arc $uv$.
We now set $I:=V(T_{x,y})\setminus C$ and $O=V\setminus (C\cup I)$. Every arc leaving $T_{x,y}\setminus \{x\}$ reaches a vertex $w$ of the left-path between $v$ and $x$ by the choice of $v$. Thus $w$ is in $C$, and hence there is no arc from $I$ to $O$.
\end{proof}

\begin{figure}
    \centering
    \includegraphics[clip, trim=0.8mm 0.8mm 0.8mm 0.6mm, width=.49\textwidth]{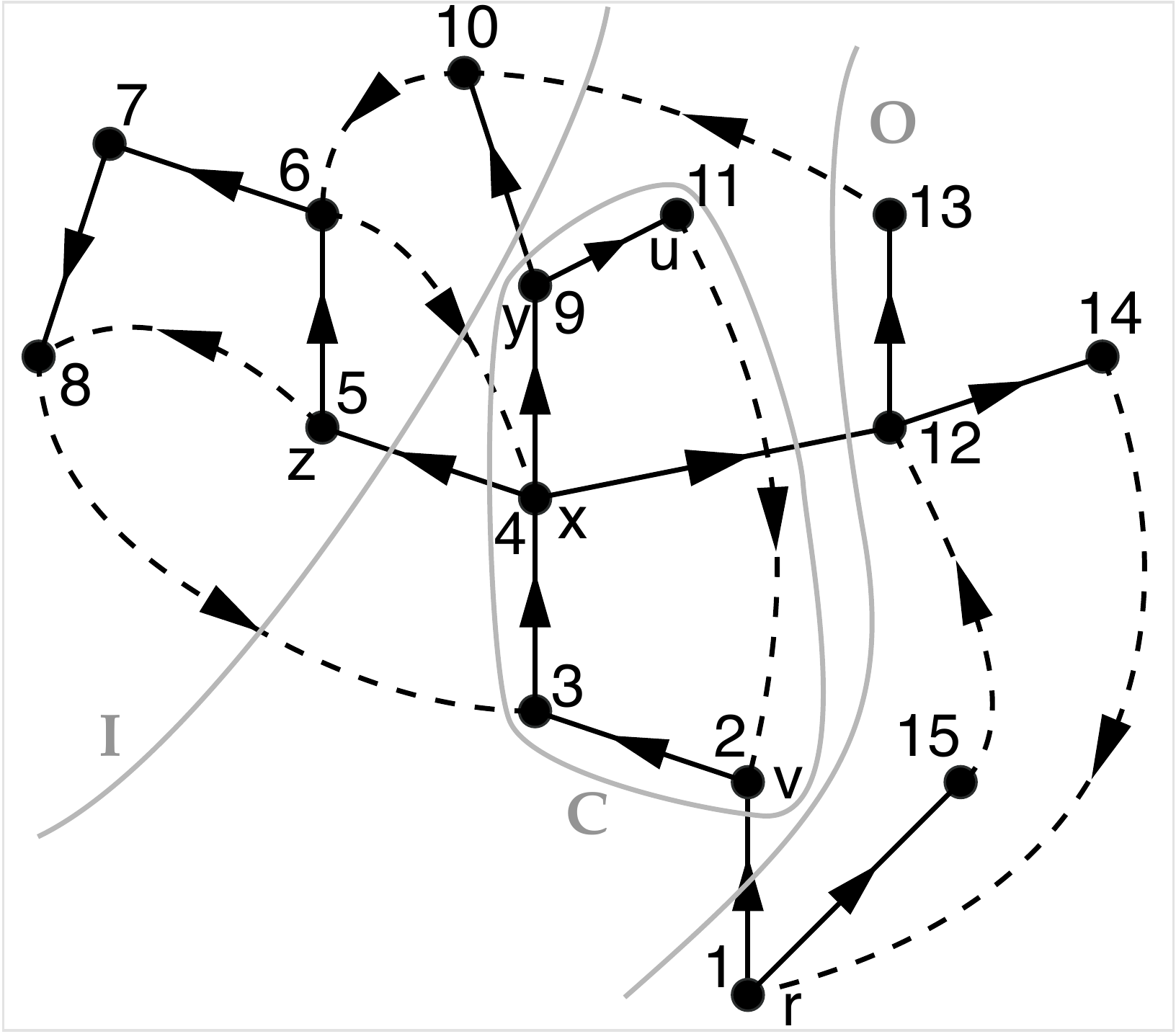}\hfill
    \includegraphics[clip, trim=0.8mm 0.8mm 0.8mm 0.6mm, width=.49\textwidth]{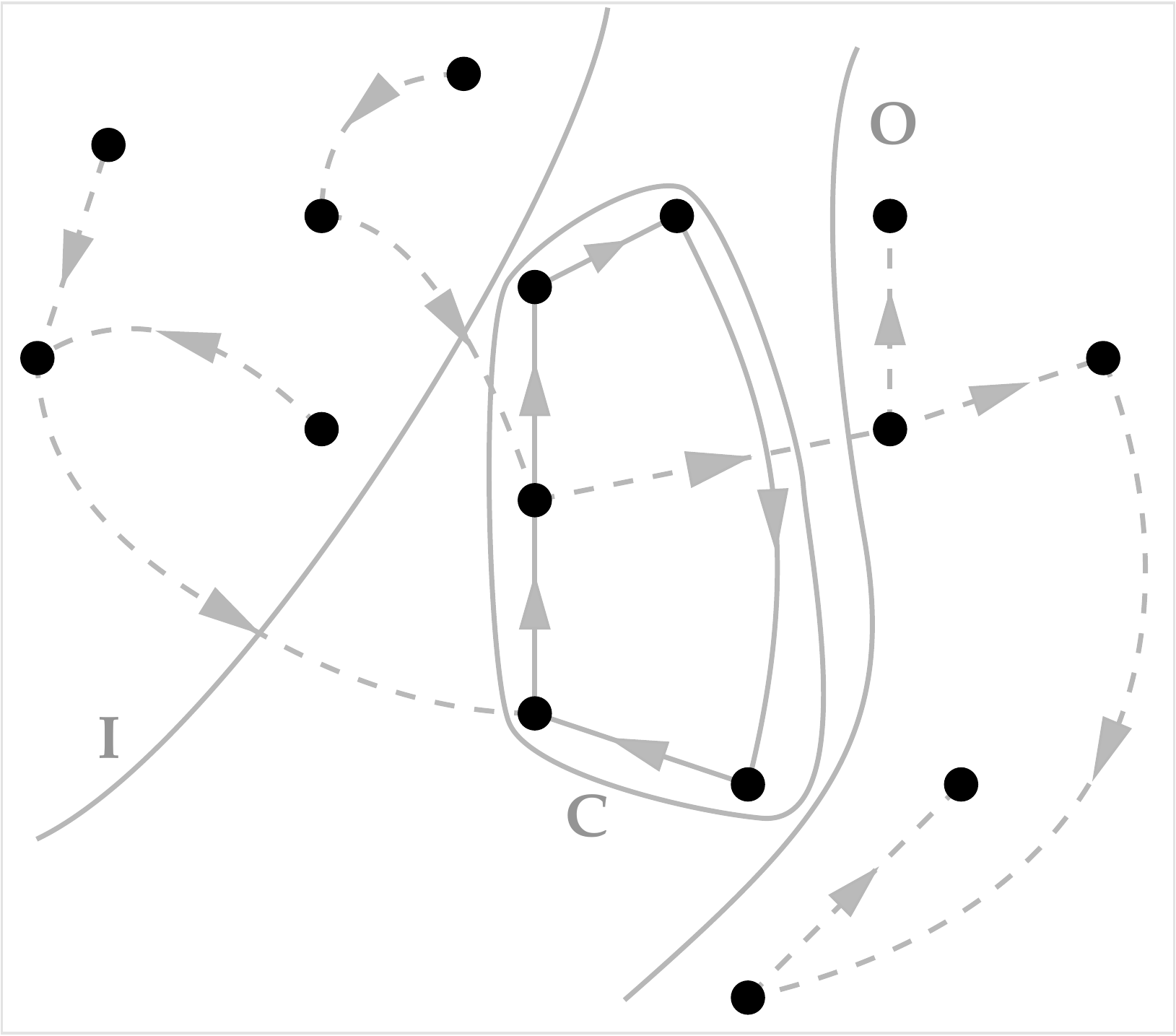}
    \caption{Left: a strong digraph (all arcs) with a left-maximal \textsc{dfs}-tree $T$ (plain arcs) and the $(I,C,O)$ decomposition associated to $T_{x,y}$. Nodes are numbered according to the corresponding \textsc{dfs} traversal. Right: the cycle spanning $C$ (plain arcs), two in-trees spanning $I$ and one out-tree spanning $O$.}
    \label{fig:ico}
\end{figure}

\begin{proposition}\label{prop:balancedtree}
Every strongly connected directed graph with $n\ge 4$ vertices has a \textsc{dfs}-tree $T$ which has a left subtree $T_{x,y}$ such that $n/3< |T_{x,y}|< 2n/3$.
\end{proposition}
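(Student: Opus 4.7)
My plan is to invoke Theorem~\ref{th:leftdfs-existence} to pick a left-maximal \textsc{dfs}-tree $T$ rooted at an arbitrary vertex (every vertex out-generates $D$ by strong connectivity), and to examine the subtree sizes $|T_{v_i}|$ along the left path $L_T=v_0,v_1,\dots,v_m$ of $T$, which form a non-increasing sequence from $n$ down to $1$. If some $|T_{v_i}|$ already lies in $(n/3,2n/3)$, then $v_i$ is not a leaf (since $n/3>1$ for $n\ge 4$), and selecting $x=v_i$ with $y$ its rightmost child gives $T_{x,y}=T_{v_i}$ as the desired left subtree.

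Otherwise the sequence jumps directly from $\ge 2n/3$ to $\le n/3$. Letting $i$ be the smallest index with $|T_{v_i}|\le n/3$ (so $i\ge 1$ and, since no term lies in the forbidden interval, $|T_{v_{i-1}}|\ge 2n/3$), I would zoom into the children $y_1=v_i,y_2,\dots,y_k$ of $v_{i-1}$ listed from left to right. By left-maximality, $|T_{y_j}|\le |T_{y_1}|=|T_{v_i}|\le n/3$ for every $j$. The sizes of the nested left subtrees $T_{v_{i-1},y_1}\subset T_{v_{i-1},y_2}\subset\dots\subset T_{v_{i-1},y_k}=T_{v_{i-1}}$ therefore grow from $1+|T_{y_1}|\le 1+n/3<2n/3$ (using $n\ge 4$) up to at least $2n/3$, with successive increments bounded by $n/3$. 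Taking $j^\star$ the largest index with $|T_{v_{i-1},y_{j^\star}}|<2n/3$, the next subtree overshoots $2n/3$ by at most $n/3$, so $|T_{v_{i-1},y_{j^\star}}|\ge 2n/3-n/3=n/3$. This produces a candidate subtree with the correct strict upper bound but only a weak lower bound.

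The main obstacle is to rule out the boundary equality $|T_{v_{i-1},y_{j^\star}}|=n/3$, which can only occur when $3\mid n$. I would handle it by contradiction: write $n=3m$ with $m\ge 2$. The equality forces $|T_{y_{j^\star+1}}|\ge m$, and combined with the left-maximality bound $|T_{y_j}|\le m$ this yields $|T_{y_1}|=\dots=|T_{y_{j^\star+1}}|=m$. Then $|T_{v_{i-1},y_{j^\star}}|=1+j^\star m$, so the hypothetical equality $1+j^\star m=m$ simplifies to $j^\star m=m-1$, which is impossible for $j^\star\ge 1$ and $m\ge 1$. Hence the lower bound is strict, and $T_{v_{i-1},y_{j^\star}}$ is a left subtree with $n/3<|T_{v_{i-1},y_{j^\star}}|<2n/3$, completing the proof.
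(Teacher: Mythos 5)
Your proof is correct and follows essentially the same approach as the paper's: scan the left path of a left-maximal \textsc{dfs}-tree for the first transition below $n/3$, then sweep the nested left subtrees of the transition node's parent, using left-maximality to bound each increment by $n/3$, to land in $(n/3,2n/3)$. The only difference is cosmetic: the paper dispatches the special case $|T_z|=n/3$ upfront (so that all subsequent increments are \emph{strictly} below $n/3$ and the lower bound comes out strict for free), whereas you allow a non-strict increment bound and rule out the boundary equality $|T_{v_{i-1},y_{j^\star}}|=n/3$ at the end via a short divisibility/counting argument --- both treatments are valid.
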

\begin{proof}
We consider a left-maximal \textsc{dfs}-tree $T$ of $D$. Scanning the left-path of $T$ from the root, we consider the first node $z$ such that $|T_z|\leq  n/3$. Let $x$ be the parent of $z$. If $|T_z|= n/3$, the left subtree $T_{x,z}$ with size $n/3+1$ is the solution we are looking for. Let $y$ be the rightmost child of $x$ such that $|T_{x,y}|<2n/3$. Assume for contradiction that $|T_{x,y}|\leq n/3$. By the definition of $z$, we have $|T_x|>n/3$, hence $y$ is not the rightmost child of $x$. In particular, $y$ has a (next) right sibling $y'$. As $|T_{y'}|\leq |T_z|<n/3$ by left-maximality, we reach a contradiction to the choice of $y$ since $|T_{x,y'}|<2n/3$. Thus $T_{x,y}$ is our solution. 
\end{proof}


\begin{corollary}\label{cor:balanceddec}
Every strongly connected directed graph has a balanced $(I,C,O)$-decomposition.
(i.e. both $I\cup C$ and $O\cup C$ have size strictly greater than $n/3$).
\end{corollary}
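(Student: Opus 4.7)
The plan is to deduce the corollary essentially by combining Proposition~\ref{prop:balancedtree} with Proposition~\ref{prop:IOC-dec}, so the argument should fit in a few lines once the small cases are handled.

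First I would dispose of the trivial case $n\leq 3$: any strongly connected digraph on at most three vertices contains a spanning directed cycle (for $n=1$ the empty cycle suffices, and for $n=2,3$ strong connectivity forces a Hamiltonian cycle), so taking $C:=V$ and $I=O=\emptyset$ already gives an $(I,C,O)$-decomposition with $|I\cup C|=|O\cup C|=n$, trivially strictly greater than $n/3$.

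For $n\geq 4$ I would apply Proposition~\ref{prop:balancedtree} to obtain a \textsc{dfs}-tree $T$ of $D$ together with a left subtree $T_{x,y}$ satisfying
\[
n/3 < |T_{x,y}| < 2n/3.
\]
Proposition~\ref{prop:IOC-dec} then yields an $(I,C,O)$-decomposition of $D$ with $T_{x,y}\subseteq I\cup C$ and $V\setminus T_{x,y}\subseteq O\cup C$. From the first inclusion we get $|I\cup C|\geq |T_{x,y}|>n/3$, and from the second inclusion $|O\cup C|\geq n-|T_{x,y}|>n-2n/3=n/3$, so both sides of the decomposition exceed $n/3$ as required.

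There is really no obstacle here: the whole content of the corollary has been packaged into the two preceding propositions. The only minor thing to be careful about is the strictness of the inequality and the trivial small-$n$ cases, which is why I state the corollary with $n/3$ as a strict lower bound and address $n\leq 3$ separately so that Proposition~\ref{prop:balancedtree} can be invoked with its hypothesis $n\geq 4$.
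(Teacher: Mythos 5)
Your $n\geq 4$ argument is exactly the paper's proof: invoke Proposition~\ref{prop:balancedtree} for a left subtree $T_{x,y}$ with $n/3<|T_{x,y}|<2n/3$, then Proposition~\ref{prop:IOC-dec} for an $(I,C,O)$-decomposition with $T_{x,y}\subseteq I\cup C$ and $V\setminus T_{x,y}\subseteq O\cup C$, and read off the two size bounds. That part is correct and complete.

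However, your dispatch of the small cases has a genuine error: it is \emph{not} true that every strongly connected digraph on $3$ vertices has a Hamiltonian directed cycle. Take $V=\{a,b,c\}$ with arcs $ab,ba,bc,cb$; this is strong (every vertex reaches every other through $b$), yet $a$ and $c$ are non-adjacent so no directed cycle visits all three vertices. So you cannot simply set $C:=V$ here. The corollary still holds for this graph --- e.g.\ $C=\{a,b\}$, $I=\emptyset$, $O=\{c\}$ gives $|I\cup C|=2>1$ and $|O\cup C|=3>1$ --- but the justification has to be by a genuine case analysis (which is what the paper tersely defers to ``enumerating the cases''), not by the Hamiltonicity claim. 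The $n=1$ case is also delicate: with $C=I=\emptyset$ and $O=\{v\}$ (or vice versa) one of $|I\cup C|$, $|O\cup C|$ is $0$, which is not $>1/3$; you would need a self-loop to make $C=\{v\}$ a cycle, and the paper's convention allows $2$-cycles but does not mention loops, so this corner case deserves an explicit word rather than ``the empty cycle suffices.''
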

\begin{proof}
If $n\ge 4$, then this is a direct consequence of Proposition~\ref{prop:IOC-dec} and Proposition~\ref{prop:balancedtree}. The case $n\leq 3$ follows by enumerating the cases.
\end{proof}

We now compute a linear size bi-tree from it.

\section{Bi-labels}
\label{sec:bilabel}

We now consider a directed graph $D$ equipped with a \emph{bi-label}, that is every vertex $x$ receives a couple $(i(x),o(x))$ of positive integers. The \emph{weight} of $D$ is $(i(V),o(V))$, the sum of all $i$ and $o$ values respectively. Assume that $D$ is a bi-labelled digraph which is the union of a digraph $D'$ and an out-tree $T^+$ rooted at $r\in V(D')$ such that $D'\cap T^+=\{r\}$. We can \emph{transfer} the weight $o(T^+)$ to $D'$ by 
adding $o(T^+\setminus r)$ to $o(r)$
and removing all the vertices of $T^+\setminus r$. We define similarly the transfer operation of $i$ for an in-tree.

Given a bi-tree $B$ of $D$, the \emph{value} of $B$ is the pair $(a,b)$ where $a$ is the sum of all $i(x)$ for $x$ in $B^-$ and $b$ is the sum of all $o(x)$ for $x$ in $B^+$. In other words, the value of $B$ is the label of the center after the transfers of $B^+$ and $B^-$. Observe that if $D'$ is obtained from $D$ by some transfer and $D'$ has a bi-tree with value $(a,b)$, then $D$ also has a bi-tree with value $(a,b)$.

\begin{theorem}\label{th:bilabelcycle}
If $C$ is a cycle equipped with a bilabel $(i,o)$ of weight $(w,w)$, it contains a bi-tree with value at least $(w/2,w/2)$. 
\end{theorem}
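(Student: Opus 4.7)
The plan is to parametrize the bi-trees of the directed cycle $C: v_0 \to v_1 \to \cdots \to v_{k-1} \to v_0$ by a centre together with an unused arc, and to exhibit two explicit candidates whose case analysis always yields a bi-tree of value at least $(w/2, w/2)$. I begin by fixing an arbitrary starting vertex $v_0$ and introducing the prefix sums
\[
A_q := \sum_{s=0}^{q-1} o(v_s) \quad \text{and} \quad B_q := \sum_{s=0}^{q-1} i(v_s)
\]
for $q = 0, 1, \ldots, k$, which are both non-decreasing and satisfy $A_0 = B_0 = 0$, $A_k = B_k = w$. Let $q^{*} := \min\{q \ge 1 : A_q \ge w/2\}$; this exists because $A_k = w$, and by minimality one has the central pair of inequalities $A_{q^{*}-1} < w/2 \le A_{q^{*}}$.

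Next I will describe the two candidate bi-trees, both spanning every vertex of $C$. The bi-tree $T_A$ is centred at $v_0$, with out-tree the path $v_0 \to v_1 \to \cdots \to v_{q^{*}-1}$ and in-tree the path $v_{q^{*}} \to v_{q^{*}+1} \to \cdots \to v_{k-1} \to v_0$; a direct computation gives its value as $\bigl(w - B_{q^{*}} + i(v_0),\; A_{q^{*}}\bigr)$. The bi-tree $T_B$ is centred at $v_{q^{*}-1}$, with in-tree the path $v_0 \to v_1 \to \cdots \to v_{q^{*}-1}$ and out-tree the path $v_{q^{*}-1} \to v_{q^{*}} \to \cdots \to v_{k-1}$; its value is $\bigl(B_{q^{*}},\; w - A_{q^{*}-1}\bigr)$. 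In each candidate the in- and out-branches share exactly the centre, so both are bona fide bi-trees of $C$.

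I will conclude with a one-line dichotomy on the size of $B_{q^{*}}$. If $B_{q^{*}} \le w/2 + i(v_0)$ then $T_A$ is already the desired bi-tree, because its in-value $w - B_{q^{*}} + i(v_0)$ is at least $w/2$ and its out-value is $A_{q^{*}} \ge w/2$. Otherwise $B_{q^{*}} > w/2 + i(v_0) \ge w/2$ is the in-value of $T_B$, while the minimality of $q^{*}$ gives $w - A_{q^{*}-1} > w/2$ as its out-value. Either way a bi-tree of value at least $(w/2, w/2)$ is exhibited.

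The step I expect to be the main obstacle is identifying the right fallback $T_B$. One is tempted to sweep a single parameter (say the out-extent at a fixed centre) and invoke an intermediate-value argument, but this can genuinely fail when the $i$- and $o$-masses are concentrated on the same side. The trick, which I will emphasize, is that the very quantity $B_{q^{*}}$ that spoils $T_A$'s in-value is exactly the in-value produced by relocating the centre to $v_{q^{*}-1}$ and flipping the roles of the two branches, so the failure condition of the first candidate coincides with the success condition of the second.
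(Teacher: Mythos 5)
Your proof is correct and realizes the same basic idea as the paper (split the cycle at one vertex into an in-path and an out-path, each carrying at least $w/2$ of the appropriate label), but locates the split point differently. The paper fixes no starting vertex: it picks a \emph{shortest} directed subpath $P$ of $C$ with $i(P)\ge w/2$ or $o(P)\ge w/2$, and the global minimality of $|P|$ forces the rest of the cycle to carry at least $w/2$ of the complementary label ``for free''; a short case split on which of $i(P)$, $o(P)$ is large then finishes. You instead anchor the scan at a fixed $v_0$, track only the $o$-prefix sums $A_q$ to find the threshold index $q^*$, and since nothing then bounds the accumulated $i$-mass $B_{q^*}$ on that side, you compensate with a dichotomy between two explicit candidates $T_A$ (centered at $v_0$) and $T_B$ (centered at $v_{q^*-1}$), whose failure conditions are complementary. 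Both are one-pass sweep arguments of comparable length; the paper's minimality trick avoids your two-candidate fallback, while your version is more explicit, avoids the WLOG relabelling of the cycle, and reads more algorithmically.
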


\begin{proof}
Consider a shortest path $P$ in $C=x_1,\dots ,x_n$ such that $i(P)\geq w/2$ or $o(P)\geq w/2$. Assume without loss of generality that $P=C[x_1,\dots ,x_k]$. First consider the case where we have $i(P)\geq w/2$. By minimality of $P$, we have $o(C[x_k,\dots ,x_n])\geq w/2$, and therefore the bi-tree $B$ centered at $x_k$ such that $B^+=C[x_k,\dots ,x_n]$ and $B^-=C[x_1,\dots ,x_k]$ satisfies the hypothesis. In the case $o(P)\geq w/2$, we proceed similarly with $x_1$ as center.
\end{proof}

The bound in Theorem~\ref{th:bilabelcycle} is sharp. Consider for this a directed 4-cycle in which all labels are $(w/4,w/4)$: any bi-tree has value $(a,b)$ with $\min\{a,b\}=w/2$.




\begin{theorem}\label{th:bitree-existence}
Every strong digraph $D=(V,A)$ on $n$ vertices contains a bi-tree $B$ such that both $B^+$ and $B^-$ have size at least $n/6$.
\end{theorem}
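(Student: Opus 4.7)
The plan is to combine the cyclic balanced separator from Corollary~\ref{cor:balanceddec} with the bi-labelling machinery of Section~\ref{sec:bilabel}. First I apply Corollary~\ref{cor:balanceddec} to obtain a balanced $(I,C,O)$-decomposition of $D$, so that both $|I\cup C|$ and $|O\cup C|$ are strictly greater than $n/3$.

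Next I reduce the task to finding a bi-tree in $C$ equipped with a suitable bi-label. I equip $D$ with the trivial bi-label $i(x)=o(x)=1$ for every vertex $x$. As noted in Section~\ref{sec:bitree-existence}, every $x\in I$ admits a directed path to $C$ whose internal vertices lie in $I$; a simultaneous reverse breadth-first search launched from all vertices of $C$ inside the subgraph induced by $I\cup C$ therefore covers $I\cup C$ by a forest of in-trees whose roots are the vertices of $C$. Symmetrically, a forward BFS in $O\cup C$ produces a forest of out-trees rooted at $C$ that covers $O\cup C$. Transferring each of these trees onto its root using the operation of Section~\ref{sec:bilabel} leaves precisely the cycle $C$, where $i(c)$ and $o(c)$ now count the sizes of the in-tree and out-tree attached to $c$. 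The total weights are $w_i=|I\cup C|>n/3$ and $w_o=|O\cup C|>n/3$.

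Finally I apply a mild extension of Theorem~\ref{th:bilabelcycle} to this labelled cycle. Since $w_i$ and $w_o$ need not coincide, I run the same argument with two independent thresholds: let $P=C[x_1,\dots,x_k]$ be a shortest prefix of $C$ such that $i(P)\ge w_i/2$ or $o(P)\ge w_o/2$. If $i(P)\ge w_i/2$, then by minimality of $P$ one has $o(C[x_1,\dots,x_{k-1}])<w_o/2$, whence $o(C[x_k,\dots,x_{|C|}])>w_o/2$, and the bi-tree centered at $x_k$ with $B^-=C[x_1,\dots,x_k]$ and $B^+=C[x_k,\dots,x_{|C|}]$ has value at least $(w_i/2,w_o/2)>(n/6,n/6)$. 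The case $o(P)\ge w_o/2$ is symmetric. Since a bi-tree of value $(a,b)$ in the transferred cycle lifts to a bi-tree of $D$ with the same value (an observation recorded at the end of Section~\ref{sec:bilabel}), we obtain a bi-tree of $D$ with $|B^-|>n/6$ and $|B^+|>n/6$.

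The main obstacle I foresee is the imbalance $w_i\ne w_o$, which prevents a plain application of Theorem~\ref{th:bilabelcycle} as stated. It is overcome by the two-threshold variant above, a verbatim adaptation of the original argument that requires no new idea; once this is in place, the reduction via the $(I,C,O)$-decomposition and the bi-label transfers delivers the desired bi-tree directly.
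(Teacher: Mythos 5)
Your proof matches the paper's argument essentially step for step: the balanced $(I,C,O)$-decomposition from Corollary~\ref{cor:balanceddec}, the bi-label transfers of the spanning in-trees and out-trees onto the cycle $C$, and then the shortest-prefix argument on the labelled cycle, unfolded back to a bi-tree of $D$. Your explicit two-threshold variant of Theorem~\ref{th:bilabelcycle} is a fair observation (the weights $w_i$ and $w_o$ need not coincide) — the paper instead applies the theorem with weight $(n/3,n/3)$ and implicitly relies on monotonicity of the bound in the labels, but the conclusion is the same.
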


\begin{proof}
By Corollary~\ref{cor:balanceddec}, $D$ has an $(I,C,O)$-decomposition such that both $I\cup C$ and  $O\cup C$ have size at least $n/3$.
Observe that $D$ is spanned by a subgraph $S$ consisting of the directed cycle spanning $C$, together with a disjoint collection of in-trees rooted at some vertices of $C$ and with other vertices in $I$, and a collection of out-trees rooted at some vertices of $C$ and with other vertices in $O$ \reviews{(see Figure~\ref{fig:ico} (Right) for an example)}.  
\reviews{This comes from strong connectivity which implies that any node $u$ in $I$ has at least one out-neighbor $v$ and $v$ must be either in $I$ or in $C$ by the definition of the $(I,C,O)$-decomposition which forbids any arc from $I$ to $O$. Selecting arbitrarily one out-neighbor for each node in $I$ results in a collection of in-trees rooted at vertices in $C$ and spanning $I$. Similarly, selecting arbitrarily one in-neighbor for each node in $O$ results in a collection of out-trees rooted at vertices in $C$ and spanning $O$.}
We consider that $S$ is a bi-labelled digraph by setting the value $(1,1)$ to every vertex. We can then transfer the weight of all in-trees and out-trees to their respective roots to obtain a bi-label on $C$ with weight at least $(n/3,n/3)$. We now invoke Theorem~\ref{th:bilabelcycle}
to obtain a bi-tree with value at least $(n/6,n/6)$ for the cycle, and unfold it by reversing appropriate transfers to get a bi-tree of $S$ with same value.
\end{proof}

\stb{As mentioned in the introduction, from the previous results we obtain the following .

\begin{corollary}
{\tt FCCP} admits a 1/18-approximation in quadratic time.
\end{corollary}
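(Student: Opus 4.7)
The plan is to combine Theorem~\ref{th:bitree-existence} with a straightforward enumeration argument, relying on the fact that the bi-tree it produces can be computed in quadratic time via the constructions of Section~\ref{sec:bitree-existence}.

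First I would invoke Theorem~\ref{th:bitree-existence} to obtain a bi-tree $B = B^- \cup B^+$ with center $r$ such that $|B^-| \geq n/6$ and $|B^+| \geq n/6$. Second I would define the enumeration: pick any total order $<$ on $V$ such that every vertex of $B^- \setminus \{r\}$ precedes $r$, every vertex of $B^+ \setminus \{r\}$ follows $r$, and the restriction of $<$ to $B^-$ (resp.\ $B^+$) extends a topological order of the in-tree $B^-$ (resp.\ out-tree $B^+$). Vertices outside $B$ can be placed arbitrarily. With this order, every arc of $B$ is forward, and therefore for every $x \in B^-$ and every $y \in B^+$ the unique $x$-to-$r$ path in $B^-$ concatenated with the unique $r$-to-$y$ path in $B^+$ is a forward directed path from $x$ to $y$. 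This yields at least $|B^-|\cdot |B^+| \geq n^2/36$ forward connected pairs, while the optimum is trivially bounded by $\binom{n}{2} < n^2/2$, giving the claimed $1/18$ approximation ratio.

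For the running time, I would observe that the only non-trivial step in the proof chain is computing a left-maximal \textsc{dfs}-tree, which by Theorem~\ref{th:leftdfs-existence} takes $O(n^2)$ time on a minimally strongly connected digraph; we may preprocess $D$ in linear time to extract such a minimally strong spanning subdigraph (e.g.\ via ear decomposition or arc-by-arc pruning), preserving both strong connectivity and the set of forward-connected pairs achievable. Extracting the $(I,C,O)$-decomposition from the left-maximal \textsc{dfs}-tree (Proposition~\ref{prop:IOC-dec} and Proposition~\ref{prop:balancedtree}), performing the label transfers of Section~\ref{sec:bilabel}, and locating the balanced cut on the cycle $C$ (Theorem~\ref{th:bilabelcycle}) are all achievable in linear time once the tree is in hand, so the overall complexity remains $O(n^2)$.

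There is essentially no hard step: the only point that deserves care is confirming that the reduction to a minimally strongly connected spanning subdigraph is legitimate, since Theorem~\ref{th:leftdfs-existence} states its quadratic running time only for that case. Since any bi-tree in a spanning subdigraph of $D$ is also a bi-tree in $D$ and produces the same forward pairs under the chosen ordering, this reduction is safe, and the corollary follows.
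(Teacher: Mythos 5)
Your overall approach coincides with the paper's: obtain the balanced bi-tree $B$ from Theorem~\ref{th:bitree-existence}, order $V$ so that a topological order of the in-tree $B^-$ precedes the center which precedes a topological order of the out-tree $B^+$, and compare $|B^-|\cdot|B^+| \ge n^2/36$ against the trivial upper bound $\binom{n}{2} < n^2/2$. That part is correct.

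The one substantive weak point is your running-time argument, which is exactly where you yourself flag that care is needed. You claim one can ``preprocess $D$ in linear time to extract \ldots a minimally strong spanning subdigraph (e.g.\ via ear decomposition or arc-by-arc pruning).'' Arc-by-arc pruning requires a strong-connectivity test per arc and therefore takes $O(m(n+m))$ time, not linear, and I am not aware of any linear-time procedure that outputs a \emph{minimally} strongly connected spanning subdigraph; this is not a standard fact and needs justification. Fortunately you do not need minimality at all: the proof of Theorem~\ref{th:leftdfs-existence} only uses the fact that the arc count is $O(n)$ so that each of the $O(n)$ recursion levels costs $O(n+m)=O(n)$. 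A spanning subdigraph with $O(n)$ arcs is trivial to obtain in $O(n+m)$ time, e.g.\ take the union of one out-branching and one in-branching from a common root, which has at most $2n-2$ arcs, is strongly connected, and any bi-tree or ordering found inside it carries over to $D$. Replacing your ``minimally strong in linear time'' claim with this observation closes the gap, and then the corollary follows exactly as you outline.
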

}

\medskip
Note that the above constructions of left-maximal \texttt{DFS} and bi-tree can be extended to a weighted digraph $D$ where each vertex $u$ has a non-negative weight $w_u$.
Given a subset $U\subseteq V$ of $n'$ vertices, we can set $w_u=1$ for $u\in U$ and $w_u=0$ for $u\notin U$ to compute similarly a bi-tree $B$ such that $B^-$ and $B^+$ both span $n'/6$ vertices of $U$. One can then easily obtain an ordering of $V$ such that a constant fraction of pairs in $U\times U$ are forward-connected.


We now consider the more general version of the problem where we want to connect pairs in a given set $R\subseteq {V\choose 2}$ of requests.

\section{Forward connecting a set of requested pairs}
\label{sec:request}

 In the {\tt Request Forward Connected Pairs Problem} ({\tt RFCPP}), the input is a strongly connected digraph $D=(V,A)$ and a set $R\subseteq {V\choose 2}$ of \emph{requests}, and the output is a vertex ordering of $D$ maximizing the number of \emph{forward pairs} $\{x,y\}\in R$, that is unordered pairs $\{x,y\}$ such that either $x,y$ or $y,x$ is forward. Note that {\tt FCPP} is the particular instance of {\tt RFCPP} satisfying $R={V\choose 2}$. Since {\tt FCPP} is in APX, it is natural to raise the following problem:

 \begin{problem}\label{pb:forwrequest}
Is there a polytime constant approximation algorithm for {\tt RFCPP}?
\end{problem}

In particular, is it always possible to satisfy a linear fraction of $R$? In the more restricted variant where $R$ is a set of couples $(x,y)$ instead of pairs (where one wants to maximize the number of forward couples $x,y$), one cannot expect to satisfy a large proportion of $R$. Indeed, if $D$ is the circuit $(v_1,\dots ,v_n)$ and $R$ consists of all couples $(v_{i+1},v_i)$, any vertex ordering can only satisfy at most one request of $R$, hence only a ratio of $1/n$ can be realized as forward couples.

Surprisingly, when requests are pairs, we could not find any set of request $R$ which is not satisfied within a ratio of $1/\log n$. However, contrary to the case of {\tt FCPP} where a positive ratio is achievable, the following result shows that there are instances of {\tt RFCPP} for which only a logarithmic ratio can be realized as forward pairs.


\begin{proposition}\label{prop:ratiorequest}
For all $n$, there exists an instance of  {\tt RFCPP} where $D$ has $2^{n+1}-1$ vertices, $R$ has size $n2^{n-1}$, and no more than $2^n$ requested pairs can be realized as forward paths.
\end{proposition}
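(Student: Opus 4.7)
The plan is to take $D$ to be the bidirectional complete binary tree of depth $n$ (each tree edge replaced by its two opposite arcs) and, after identifying the $2^n$ leaves with $\{0,1\}^n$ via the root-to-leaf prefix, to take $R$ to be the $n\cdot 2^{n-1}$ pairs of leaves that differ in exactly one bit (the edges of the hypercube $Q_n$ on the leaves). The vertex count is $\sum_{d=0}^n 2^d=2^{n+1}-1$, strong connectivity is immediate, and the request count matches. The key structural remark is that since the underlying undirected graph is a tree, any forward path between two leaves $x$ and $y$ must coincide with the unique tree path joining them; realizing a request $\{x,y\}$ therefore reduces to requiring that the sequence of vertex labels along that tree path be strictly monotone in at least one direction.

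For the upper bound I would strengthen the statement and argue by induction on $n$. Given an ordering $\pi$ of $V(D)$, let $f_n(\pi)$ denote the number of realized requests, $A_n(\pi)$ the number of leaves $\ell$ for which the path $\ell,p(\ell),\dots,r$ from $\ell$ up to the root is strictly increasing in $\pi$, and $D_n(\pi)$ the number of leaves $\ell$ for which the reverse path $r,\dots,\ell$ is strictly increasing. The claim I propose to prove is that $f_n(\pi)+A_n(\pi)\le 2^n$ and $f_n(\pi)+D_n(\pi)\le 2^n$ for every $\pi$; since $A_n,D_n\ge 0$, this immediately yields $f_n(\pi)\le 2^n$, the bound in the proposition.

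For the inductive step, write $T=T_L\cup\{r\}\cup T_R$ with subtrees of depth $n-1$ and roots $r_L,r_R$. Since forward paths are simple, a forward path between two leaves of $T_L$ cannot traverse $r$ (it would have to revisit $r_L$), so requests internal to $T_L$ are realized exactly with respect to $\pi$ restricted to $T_L$, and likewise for $T_R$; this lets the inductive hypothesis apply to each subtree. The remaining cross requests have LCA $r$, and a short case analysis on the relative positions of $r_L,r,r_R$ handles them. If $r_L,r_R<r$ then neither direction through $r$ is monotone so no cross request is realized, and $A_n=A_{T_L}+A_{T_R}$ with $D_n=0$; the case $r_L,r_R>r$ is symmetric with $A_n=0,\,D_n=D_{T_L}+D_{T_R}$. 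In the mixed case $r_L<r<r_R$ only the direction $x\to y$ with $x\in T_L,y\in T_R$ can produce a monotone increasing path, which further forces $x\in A_{T_L}$ and $y\in D_{T_R}$, so the number $c$ of realized cross requests satisfies $c\le\min(A_{T_L},D_{T_R})$ with $A_n=A_{T_L}$ and $D_n=D_{T_R}$; the reflected mixed case $r_R<r<r_L$ is symmetric.

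Plugging $f_{T_L}+A_{T_L}\le 2^{n-1}$ and $f_{T_R}+D_{T_R}\le 2^{n-1}$ (and their duals) into $f_n+A_n=f_{T_L}+f_{T_R}+c+A_n$ yields, in the mixed case, $f_n+A_n\le 2^n+\min(A_{T_L},D_{T_R})-D_{T_R}\le 2^n$ via $\min(a,b)\le b$, and analogously $f_n+D_n\le 2^n$ with the roles of $A$ and $D$ swapped; the other cases collapse even more directly by invoking the appropriate inductive inequality on each side. The base case $n=1$ is immediate by inspecting the three possible positions of $r$ between its two leaf children. The main conceptual obstacle I anticipate is identifying the correct strengthening of the statement: tracking only $f_n$, or only one of $f_n+A_n$, $f_n+D_n$, is not enough to close the mixed case, whose cross bound couples the up-monotone count on one subtree with the down-monotone count on the other; once both inequalities are carried in parallel and the appropriate one is invoked on each subtree, the identity $\min(a,b)\le b$ does all the remaining work.
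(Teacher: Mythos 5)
Your proof is correct and is essentially the paper's own argument: your quantities $A_n(\pi)$ and $D_n(\pi)$ are precisely the paper's $in(r)$ and $out(r)$ at the root, your strengthened invariant $f_n+A_n\le 2^n$, $f_n+D_n\le 2^n$ is the paper's $\rf(x)+in(x)\le\ell(x)$, $\rf(x)+out(x)\le\ell(x)$, and the four-way case analysis on the orientations of the arcs between $r$ and its children $r_L,r_R$, together with the $\min(A_{T_L},D_{T_R})$ bound on cross-requests, matches the paper's induction step exactly. The only cosmetic difference is that you specialize the arbitrary matchings to the hypercube instance (which the paper also mentions as a valid choice) and phrase the induction on the depth $n$ rather than on an arbitrary internal node.
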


\begin{proof}
The graph $D$ is the complete binary tree of height $n$ seen as a directed graph by letting each edge to be a circuit of length two.
The set of requests is recursively defined in the following way. We consider the set $L_l$ of all leaves which are descendants of the left child of the root $r$ and the set $L_r$ of leaves descendants of its right child. Now we pick an arbitrary perfect matching $M$ between $L_l$ and $L_r$ and set as requests all the $|M|$ pairs formed by the edges of $M$. We call this set $R_1$, and recursively define in the same way a set of requests for each of the two children of $r$. We stop when reaching leaves. With the right choice of matchings, the set of requests can correspond for instance to a hypercube of dimension $n$ on the leaves. We shall not need it, but it gives a good intuition of the possible structure of requests.

To sum up, $D$ has $2^{n+1}-1$ nodes, $2^n$ of them being leaves. Each internal node creates a matching of requests in its set of descendant leaves, hence the total number of requests is $n.2^{n-1}$.

Any ordering $<$ of $V(D)$ can be considered as the sub-digraph $D'$ of $D$ where we keep only arcs $xy$ satisfying $x<y$. Note that this corresponds to an orientation of the edges of the tree.
Our goal is to show that the number of requests $x,y$ which are connected by a forward path is at most $2^n$. To show this, for a given node $x$ we denote by $\rf(x)$ the number of requests between descendants of $x$ (in the tree $D$) which are realized by a forward path. We also denote by $in(x)$ the number of leaves $y$ descendant of $x$ such that there is a forward path from $y$ to $x$. Finally $out(x)$ is the number of leaves $z$ descendant of $x$ such that there is a forward path from $x$ to $z$.

We now show by induction that both $\rf(x)+in(x)$ and $\rf(x)+out(x)$ are upper bounded by $\ell (x)$, the number of leaves descendant of $x$. This is true if $x$ has two leaves as children since either $x$ is a source (resp. a sink) in $D'$ and $\rf(x)+\max(in(x),out(x))=0+2$, or $x$ has both in and out-degree 1 and $\rf(x)+in(x)=\rf(x)+out(x)=1+1$. For the induction step, we assume that $x$ has two children $y,z$.
\begin{itemize}
    \item if we have both arcs $xy,xz$ in $D'$, we have $\rf(x)+in(x)\leq \rf(x)+out(x)=\rf(y)+\rf(z)+out(y)+out(z)$, which by induction is at most $\ell(y)+\ell(z)=\ell(x)$. \item if we have both arcs $yx,zx$, we conclude similarly.
    \item if we have both arcs $yx,xz$, note that $\rf(x)\leq \rf(y)+\rf(z)+\min (in(y),out(z))$. Also $in(x)=in(y)$ and $out(x)=out(z)$. Thus, if $in(y)\leq out(z)$ we have $\rf(x)+in(x)\leq \rf(x)+out(x)\leq \rf(y)+\rf(z)+\min (in(y),out(z))+out(z)=\rf(y)+in(y)+\rf(z)+out(z)\leq \ell(y)+\ell(z)=\ell(x)$. And the same conclusion holds when $in(y)>out(z)$.
    \item if we have both arcs $xy,zx$, we conclude as previously.
\end{itemize}

Thus the maximum number of forward requests is $2^n$. 
\end{proof}

Noteworthily, for the instance in Proposition~\ref{prop:ratiorequest}, there is a set of $2n$ orderings such that every pair $x,y$ of leaves is forward connected in one of the orderings. Let us call a \emph{forward cover} of $D$ a set of vertex orderings $\leq_1,\dots ,\leq_k$ such that for every pair $\{x,y\}$, there is some $i$ such that $x,y$ or $y,x$ is a forward pair in $\leq _i$. This suggests the following conjecture:

\begin{conjecture}\label{conj:covering}
Every strong digraph $D$ on $n$ vertices has a $O(\log n)$ size forward cover.
\end{conjecture}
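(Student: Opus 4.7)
The plan is a divide-and-conquer exploiting the balanced cyclic separator of Corollary~\ref{cor:balanceddec}. Let $f(n)$ denote the worst-case forward cover size for a strong digraph on $n$ vertices; I would aim to establish a recurrence of the form $f(n)\le f(2n/3)+O(1)$, which gives $f(n)=O(\log n)$. First I would apply Proposition~\ref{prop:balancedtree} to a left-maximal \textsc{dfs}-tree of $D$, obtaining a balanced left-subtree $T_{x,y}$ and an associated $(I,C,O)$-decomposition in which the partition $V=V_1\sqcup V_2:=V(T_{x,y})\sqcup(V\setminus V(T_{x,y}))$ is balanced with $|V_1|,|V_2|\in(n/3,2n/3)$, and the cycle $C$ separates $V_1\setminus C$ from $V_2\setminus C$. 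I would then produce, by induction, forward covers $\mathcal{F}_1$ and $\mathcal{F}_2$ of the two halves, pad the shorter one by repetition, and combine them pairwise by concatenation: the $i$-th combined ordering places all of $V_1$ in the $i$-th order of $\mathcal{F}_1$ and then all of $V_2$ in the $i$-th order of $\mathcal{F}_2$. Provided each sub-cover realises its pairs via forward paths staying inside the corresponding half, concatenation preserves coverage, so the within-pairs are covered by only $\max(|\mathcal{F}_1|,|\mathcal{F}_2|)\le f(2n/3)$ orderings.

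For cross pairs $\{x,y\}$ with $x\in V_1$ and $y\in V_2$, the cycle $C$ acts as a bridge. I would use an ordering placing $V_1\setminus C$ first (arranged along in-trees to $C$, as in the proof of Theorem~\ref{th:bitree-existence}), then $C$ in cyclic order, then $V_2\setminus C$ (arranged along out-trees from $C$); every pair $(x,y)\in I\times O$ then becomes forward-connected through $C$. A symmetric ordering, together with a couple more for pairs touching $C$, handles the reverse direction and cycle-incident pairs. Since any linear traversal of a directed cycle already covers all pairs within the cycle (cf.\ the argument underlying Theorem~\ref{th:bilabelcycle}), only $O(1)$ extra orderings per recursion level are needed, and the recurrence $f(n)\le f(2n/3)+O(1)$ is closed.

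The main obstacle is that $D[V_1]$ and $D[V_2]$ are in general not strongly connected, so the induction hypothesis does not apply to them verbatim. A natural remedy is to contract $V_2$ (respectively $V_1$) to a single super-vertex, yielding a strong quotient of size at most $2n/3+1$ on which one recurses; however, lifting an ordering of the quotient back to $V$ requires expanding the super-vertex into an internal order of $V_2$ that realises each traversal through the super-vertex as a forward path. Naively pairing the lifts with an independent forward cover of $V_2$ incurs a multiplicative $\log n$ factor, yielding only $f(n)=O(\log^2 n)$. Overcoming this---by synchronising the two recursions so that a single family of orderings simultaneously serves as a lift on one side and as the internal order of the contracted block on the other---is the delicate point on which the whole approach hinges. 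I would try to exploit the left-maximal \textsc{dfs}-tree to coordinate these choices consistently across the two halves, using the fact that the DFS structure induces compatible left-right orders on both sides of the separator.
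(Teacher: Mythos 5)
This statement is a \emph{conjecture} in the paper, not a theorem: the authors explicitly name resolving it as their main open question. The only argument given there is a sketch for the special case of bi-oriented graphs, via a centroid split of a spanning tree and pairwise gluing of the two recursive families at the centroid --- close in spirit to your divide-and-conquer. There is therefore no paper proof to compare your attempt against.

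Your proposal does not close the conjecture either, and to your credit you say so. The obstacle you identify --- that the two halves of a balanced $(I,C,O)$-decomposition need not be strongly connected, and that contracting one side and lifting the resulting orderings back naively costs a multiplicative $\log n$ factor --- is precisely what makes the directed case harder than the bi-oriented one (where the centroid is a single cut vertex and the two halves stay connected). Your last paragraph is a research plan for closing that gap, not an argument, so the recurrence $f(n)\le f(2n/3)+O(1)$ is not established, and the proposal does not prove the conjecture.

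There is also a secondary gap you pass over: the claim that cross-pairs $(x,y)\in I\times O$ are handled by $O(1)$ extra orderings ``through $C$''. If $x$ hangs off $c_i\in C$ and $y$ hangs off $c_j\in C$, the route $x\to\cdots\to c_i\to\cdots\to c_j\to\cdots\to y$ is forward only when the break point of the cyclic order of $C$ lies on the arc segment from $c_j$ back around to $c_i$; to realize every \emph{ordered} pair of entry/exit positions on the cycle via such routes one would need to break at every arc of the cycle, i.e.\ $\Omega(|C|)$ orderings, not $O(1)$. A single ordering of $C$ does cover all \emph{unordered} pairs inside $C$ (as you note), but cross-pairs need ordered entry/exit pairs. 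One might try to rescue this by exploiting $y\to x$ paths as well, but that case analysis is absent, and it again runs into the same synchronisation issue you already flagged as the crux.
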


Conjecture~\ref{conj:covering} holds when $D$ is a bi-oriented graph\reviews{, that is an undirected graph considered as a digraph by replacing each edge $\{u,v\}$ by two arcs $uv$ and $vu$}. Here is a sketch: it suffices to consider a spanning tree $T$ of $D$ and to fix a centroid $c$. We then let $T=T_1\cup T_2$ where $T_1\cap T_2=\{c\}$ and both $T_1,T_2$ have size at least $n/3$. We consider any vertex enumeration $T_1\leq c\leq T_2$. Now we find recursively in $T_1$ and $T_2$ two families $F_1$ and $F_2$ of vertex enumerations of size logarithmic in $2n/3$. We conclude by gluing (on $c$) the orderings in $F_1$ and $F_2$ by pairs. We get in total $1+\max(|F_1|,|F_2|)$ orders, hence a logarithmic size family.

Another intriguing question is the "forward orientation" of a pair $x,y$. We have seen that in some cases, like the circuit, pairs $v_i,v_{i+1}$ are (obviously!) much more likely to be forward than pairs $v_{i+1},v_i$. If indeed a forward cover of logarithmic size exists, it could be that some couples $x,y$ are more involved in a small cover than their reverse $y,x$. This suggests a kind of "forwardness" of a pair of vertices which might be interesting to characterize.

\section{Questions} 
\label{sec:questions}

Our main open question concerns Conjecture~\ref{conj:covering}. A  possible way to solve it would be to find $O(\log n)$ bi-trees such that for any pair $\{x,y\}$, there exists a bi-tree $T$ such that either $x\in T^-$ and $y\in T^+$, or $y\in T^-$ and $x\in T^+$. The reason is that each bi-tree can easily be converted to an ordering where each couple $(x,y)$ with $x\in T^-$ and $y\in T^+$ is a forward couple.
\lv{Note that Conjecture~\ref{conj:covering} would imply that any instance of \texttt{RFCPP} always have a solution realizing a $1/O(\log n)$ fraction of requested pairs, opening the possibility for polynomial time $O(\log n)$-approximation.}

Another question concerns the maximum size of a balanced bi-tree that can be found in any strongly connected digraph with $n$ vertices. Our construction shows that any such digraph contains a balanced bi-tree where both trees have size $n/6$. The construction given in \cite{BalliuBCOV2023} implies that there exists strongly connected digraphs such that in any bi-tree $T$, either $T^-$ or $T^+$ has size at most $n/3+O(1)$. This leaves a factor 2 gap.

If we relax the bi-tree definition by requiring that the \reviews{in-branching and the out-branching are edge disjoint (an \emph{in-out-branching}) and may overlap over more than one vertex (and still share the same root). What is the maximum size of a balanced in-out-branching} in any strongly connected digraph? The upper-bound of $n/3+O(1)$ given in \cite{BalliuBCOV2023} indeed holds for in-out-branchings. Is there a gap between the maximum size of a balanced in-out-branching and that of a balanced bi-tree?

More generally, what are the exact values of $r_s$ and $r_t$? In other words, what is the maximum ratio of couples that can be connected through an ordering of the arcs, or an ordering of the vertices respectively? Is it possible to obtain an interesting lower-bound of $r_t$ as a function of $r_s$?

\subsubsection*{Acknowledgement}
This research was partially supported by the 
ANR project Digraphs
ANR-19-CE48-0013 and the ANR project Tempogral ANR-22-CE48-0001.

\bibliographystyle{plain}
\bibliography{biblio}

\end{document}